\newtheorem{theorem}{Theorem}[section]
\newtheorem{corollary}[theorem]{Corollary}
\theoremstyle{definition}
\newtheorem{remark}[theorem]{Remark}
\providecommand{\Jac}{\mathop{\rm Jac}\nolimits}
\newcommand{\Adual}{\widehat{A}}
\newcommand{\varphidual}{\widehat{\varphi}}
\newcommand{\QQ}{{\mathbb{Q}}}
\newcommand{\ZZ}{{\mathbb{Z}}}
\newcommand{\mybar}[1]{
  \mathchoice
  {#1\llap{$\overline{\phantom{\displaystyle\rm#1}}$}}
  {#1\llap{$\overline{\phantom{\textstyle\rm#1}}$}}
  {#1\llap{$\overline{\phantom{\scriptstyle\rm#1}}$}}
  {#1\llap{$\overline{\phantom{\scriptscriptstyle\rm#1}}$}}
}  
\renewcommand{\bar}{\mybar}
\newcommand{\Kbar}{\bar{K}}
\begin{document}

\title[Torsion points of large order]
{Genus-$2$ Jacobians with\\ torsion points of large order}

\author{Everett W. Howe} 
\address{Center for Communications Research,
         4320 Westerra Court,
         San Diego, CA 92121, USA}
\email{\href{mailto:however@alumni.caltech.edu}{however@alumni.caltech.edu}}
\urladdr{\url{http://www.alumni.caltech.edu/~however/}}

\date{9 December 2014}
\keywords{Curve, Jacobian, torsion}

\subjclass[2010]{Primary 14G05; Secondary 11G30, 14H25, 14H45}

\begin{abstract}
We produce new explicit examples of genus-$2$ curves over the rational numbers
whose Jacobian varieties have rational torsion points of large order.  In
particular, we produce a family of genus-$2$ curves over $\QQ$ whose Jacobians
have a rational point of order $48$, parametrized by a rank-$2$ elliptic curve
over $\QQ$, and we exhibit a single genus-$2$ curve over $\QQ$ whose Jacobian
has a rational point of order $70$, the largest order known.  We also give 
new examples of genus-$2$ Jacobians with rational points of 
order~$27$,~$28$, and~$39$.

Most of our examples are produced by `gluing' two elliptic curves together 
along their $n$-torsion subgroups, where $n$ is either $2$ or~$3$.  The 
$2$-gluing examples arise from techniques developed by the author in joint work
with Lepr\'evost and Poonen 15 years ago. The $3$-gluing examples are made
possible by an algorithm for explicit $3$-gluing over non-algebraically closed
fields recently developed by the author in joint work with Br\"oker, Lauter,
and Stevenhagen.
\end{abstract}

\maketitle

\section{Introduction}
\label{S:intro}

In the late 1970s, 
Mazur~\citelist{\cite{Mazur1977a}\cite{Mazur1977b}\cite{Mazur1978}} determined
the $15$ groups that can appear as the group of rational torsion points on an 
elliptic curve over $\QQ$.  It is not known at present whether or not there are
only finitely many groups (up to isomorphism) that occur as the rational 
torsion subgroups of Jacobians of genus-$2$ curves over~$\QQ$.  Over the past 
$25$ years, researchers have searched for genus-$2$ curves over $\QQ$ whose 
Jacobians have torsion points of large order, and have found or constructed 
examples of curves whose Jacobians have rational torsion points of order $n$,
for $1\le n\le 30$, $32\le n\le36$, and $n\in\{39,40,45,48,60,63\}$
(see \citelist{
\cite{ElkiesWeb}
\cite{Flynn1990}
\cite{HoweLeprevostEtAl2000}
\cite{Leprevost1991a}
\cite{Leprevost1991b}
\cite{Leprevost1992}
\cite{Leprevost1995}
\cite{Leprevost1996}
\cite{Ogawa1994}
\cite{PlatonovPetrunin2012a}
\cite{PlatonovPetrunin2012b}}).
In fact, there are infinite families of genus-$2$ Jacobians over $\QQ$ with
rational torsion points of order $n$ for $1\le n\le 26$ and 
$n\in\{30,32,35,40,45,60\}$; this can be seen from the references cited above,
except for $n\in\{14,16,18,22,26\}$.  For these values of $n$, the existence
of infinite families was proven by Lepr\'evost in an unpublished preprint.
Lepr\'evost obtained Jacobians with torsion points of order $14$, $18$, $22$, 
and $26$ by specializing families of genus-$2$ curves $y^2 = f$ with torsion 
points of order $7$, $9$, $11$, and $13$ so that the polynomial $f$ splits in 
a way that ensures the existence of a rational $2$-torsion point, and he
obtained an infinite family of curves with torsion points of order $16$ by
using the methods of~\cite{Leprevost1992}.

In this paper we present a genus-$2$ curve over $\QQ$ whose Jacobian has a 
rational torsion point of order $70$, the largest order yet discovered. We also
exhibit five genus-$2$ curves whose Jacobians have a rational torsion point of 
order~$28$; previously, only two such curves were
known~\citelist{
\cite{PlatonovPetrunin2012a}*{Theorem~4, p.~288}
\cite{PlatonovZhgunEtAl2013}*{Theorem~3, p.~320}}.  As we explain in
Section~\ref{S70}, we obtain these curves by `gluing' two elliptic curves 
together along their $3$-torsion subgroup, using formulas 
from~\cite{BrokerHoweEtAl2014}*{Appendix}.

We also show that there is an infinite family of genus-$2$ curves over $\QQ$ 
whose Jacobians have a rational torsion point of order $48$, the
second-largest order for which an infinite family of curves is known.  This 
family is produced by using the methods of~\cite{HoweLeprevostEtAl2000}, and
we present it in Section~\ref{S48}.

Finally, by conducting a na\"{\i}ve search of genus-$2$ curves given by 
equations with small coefficients, we find four new examples of genus-$2$
curves whose Jacobians have rational torsion points of large order: three
Jacobians that have a rational point of order~$27$
and one with a rational point of order~$39$. We present these
curves in Section~\ref{S:small}.

\section{Torsion points of order \texorpdfstring{$28$}{28}
         and \texorpdfstring{$70$}{70}}
\label{S70}

Let $E_1$ and $E_2$ be elliptic curves over~$\QQ$, and suppose there is an
isomorphism $\psi\colon E_1[3]\to E_2[3]$ of the $3$-torsion subgroup-schemes
of $E_1$ and $E_2$ that is an anti-isometry with respect to the Weil pairings
on $E_1[3]$ and $E_2[3]$.  Let $G$ be the graph of $\psi$, so that $G$ is a 
maximal isotropic subgroup of $(E_1\times E_2)[3]$ with respect to the
product of the Weil pairings.  Let $A$ be the
abelian surface $(E_1\times E_2)/G$ and let $\varphi\colon E_1\times E_2\to A$
be the natural isogeny. Then there is a commutative diagram
\[
\xymatrix{
E_1\times E_2 \ar[d]^{\varphi} \ar[r]^{3}       &  E_1\times E_2  \\
      A                        \ar[r]^{\lambda} & \Adual\ar[u]_{\varphidual}.
}
\]
Here the top arrow is the multiplication-by-$3$ map and $\Adual$ is the dual
abelian surface of~$A$.  The existence of the isogeny 
$\lambda\colon A\to\Adual$ follows from the fact that $G$ is a maximal
isotropic subgroup of the $3$-torsion of $E_1\times E_2$
(see \cite{Milne1986}*{Proposition~16.8, p.~135}). 
By considering the degrees of the other maps in the diagram, we see that
$\lambda$ is an isomorphism; furthermore, it is a polarization.  Thus, 
$(A,\lambda)$ is a principally-polarized abelian surface, so it is the 
Jacobian of a possibly-singular curve~$C$.  A result of 
Kani~\cite{Kani1997}*{Theorem~3, p.~95} shows that $C$ will be singular if and 
only if $\psi$ is the restriction to $E_1[3]$ of a $2$-isogeny $E_1\to E_2$.

It is straightforward to show that if $C$ is a genus-$2$ curve over $\QQ$
whose polarized Jacobian is $(3,3)$-isogenous over $\QQ$ to a product of two 
elliptic curves $E_1$ and $E_2$ over $\QQ$ with the product polarization, then
$C$ can be obtained from this construction for some anti-isometry 
$\psi\colon E_1[3]\to E_2[3]$; the argument is an easy variant of the proof 
of~\cite{HoweLauter2003}*{Lemma~7, p.~1684}.

Suppose $E_1$ and $E_2$ have rational torsion points of order $N_1$ and~$N_2$,
respectively, and suppose $C$ is a curve whose Jacobian is $(3,3)$-isogenous to
$E_1\times E_2$.  If $N_1$ and $N_2$ are coprime to one another and neither is
divisible by~$3$, then $\Jac C$ has a torsion point of order $N_1N_2$. If $N_1$
and $N_2$ are both divisible by $3$ and if $(N_1/3,N_2/3) = 1$, then $\Jac C$ 
will have a torsion point of order $N_1N_2/3$.

By Mazur's theorem, the possible values of $N_1$ and $N_2$ for elliptic
curves over $\QQ$ are the integers from $1$ to $12$, excluding~$11$. The only 
$(N_1,N_2)$ pairs that will possibly give us new orders of torsion in genus-$2$
Jacobians, or orders for which we have only finitely many examples, are 
$(4,7)$, $(7,8)$, and $(7,10)$.

Algorithm 5.4 of~\cite{BrokerHoweEtAl2014} (which we will refer to as the 
``BHLS $3$-gluing algorithm'') takes as input a pair of elliptic curves $E_1$
and $E_2$ over a base field~$k$, and outputs the list of all of the genus-$2$ 
curves $C$ over $k$ whose Jacobians are $(3,3)$-isogenous (over~$k$) to the
product $E_1 \times E_2$.  As we have noted, such curves $C$ will exist only 
when there is an anti-isometry between the group schemes $E_1[3]$ and $E_2[3]$.
The existence of such an anti-isometry implies that the mod-$3$ Galois
representations attached to $E_1$ and $E_2$ are isomorphic, so before we apply
the BHLS $3$-gluing algorithm to a pair of elliptic curves over $\QQ$, it makes
sense to first check, for several primes $\ell$ of good reduction, that the 
mod-$\ell$ reductions of the two curves have traces of Frobenius that are 
congruent modulo~$3$.

For our pool of candidate elliptic curves we combined two databases of curves 
over $\QQ$: Cremona's database~\cite{CremonaWeb} of all elliptic curves of 
conductor at most $339999$, and the Stein--Watkins 
database~\cite{SteinWatkins2002} of certain elliptic curves of conductor at
most $10^8$ and certain elliptic curves of prime conductor at most $10^{10}$.
For the pairs $(N_1,N_2)$ of interest to us, we went through the combined 
databases and made a list of the curves with $N_1$-torsion points and a list of
the curves with $N_2$-torsion points. Then, for every $E_1$ in the first list
and $E_2$ in the second list, we used the BHLS $3$-gluing algorithm to try to 
glue $E_1$ to $E_2$ along their $3$-torsion subgroups.  Our results follow.

(We also tried using elliptic curves with large torsion subgroups produced by 
specializing the universal elliptic curves with $N$-torsion.  We used the
models for the universal curves given 
in~\cite{HoweLeprevostEtAl2000}*{Table~3, p.~219},
which are based on Kubert's curves~\cite{Kubert1976}*{Table~3, p.~217}, and we 
let the parameter $t$ run through all rational numbers of height at
most~$1000$. This additional pool of elliptic curves did not lead us to any
further examples.)

\subsection{Torsion points of order \texorpdfstring{$4$}{4} 
                                and \texorpdfstring{$7$}{7}}
For $N_1=4$ and $N_2=7$ we found five pairs of curves that we could glue
together.  

\begin{theorem}
\label{T:28}
The Jacobian of each of the following genus-$2$ curves over $\QQ$ has a 
rational torsion point of order $28$\textup{:}
{\allowdisplaybreaks
\begin{align*}
C_{28,1}\colon y^2 + (x^2 + x) y &= x^6 + 3 x^5 + 5 x^4 - 4 x^2 - 10 x + 4\\
C_{28,2}\colon y^2 + (x^2 + x) y &= x^6 + 3 x^5 + 3 x^4 + 13 x^3 - 6 x^2 + 18 x\\
C_{28,3}\colon y^2 + (x^2 + x) y &= 4 x^6 - 2 x^5 + 18 x^4 + 3 x^3 + 13 x^2 + 23 x - 11\\
\begin{aligned}
C_{28,4}\colon y^2 + (x^2 + x) y\\ \ \ 
\end{aligned}     &\begin{aligned}
                    & \!= 28320768 x^6 + 167100960 x^5 + 213557586 x^4 - 35302844 x^3\\[-1ex]
                    &\qquad  + 154134546 x^2 - 155174208 x + 40064896\\
                    \end{aligned}\\
C_{28,5}\colon y^2 + (x^2 + x) y &= 25 x^6 - 455 x^5 + 1675 x^4 + 2494 x^3 + 570 x^2 - 1210 x.
\end{align*}
}
\end{theorem}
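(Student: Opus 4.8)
\emph{Proof strategy.} By the discussion preceding the statement, it suffices, for each $i\in\{1,\dots,5\}$, to produce a pair of elliptic curves $E_1=E_1^{(i)}$ and $E_2=E_2^{(i)}$ over $\QQ$ such that $E_1$ has a rational point of order $4$, $E_2$ has a rational point of order $7$, and $\Jac C_{28,i}$ is $(3,3)$-isogenous over $\QQ$, with the product polarization, to $E_1\times E_2$. Granting this, the construction of Section~\ref{S70} applies: the isogeny $\varphi\colon E_1\times E_2\to\Jac C_{28,i}$ has degree $9$ and kernel contained in $(E_1\times E_2)[3]$, so the image in $\Jac C_{28,i}$ of an order-$28$ point of $(E_1\times E_2)(\QQ)$ — namely $(P_1,P_2)$ with $P_1,P_2$ generating the relevant cyclic torsion subgroups — again has order $28$, because $\gcd(9,28)=1$.

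So the plan is as follows. First, following the recipe described above, I would take from the union of Cremona's and the Stein--Watkins databases the list of curves carrying a rational $4$-torsion point and the list carrying a rational $7$-torsion point, discard the pairs whose mod-$\ell$ Frobenius traces fail to be congruent modulo $3$ for a handful of small primes $\ell$ of good reduction, and run the BHLS $3$-gluing algorithm on each surviving pair; this yields exactly five successful pairs $(E_1^{(i)},E_2^{(i)})$ and, for each, a genus-$2$ curve $(3,3)$-isogenous over $\QQ$ to the product which, after a change of coordinates over $\QQ$, is the curve $C_{28,i}$ in the statement. Second, I would record explicit Weierstrass models for the ten elliptic curves, together with generators of the relevant $4$- and $7$-torsion subgroups; these are routine to write down from the databases.

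Third — the step deserving the most care — I would certify the $(3,3)$-isogeny, and indeed I would prefer to verify the conclusion of the theorem intrinsically on the genus-$2$ side rather than trust the gluing algorithm as a black box. Concretely, for each $i$ I would exhibit an explicit divisor class $D_i\in\Jac C_{28,i}(\QQ)$ (for instance the image, under a $\QQ$-rational description of $\varphi$, of an order-$28$ point of the product, or simply a class located by a short search on the Jacobian), check that $28D_i=0$, and check that $4D_i\neq0$ and $14D_i\neq0$; since every proper divisor of $28$ divides either $4$ or $14$, this forces $D_i$ to have order exactly $28$. All of this is a finite computation over $\QQ$, with reductions modulo small primes of good reduction available to bound the torsion order quickly before the exact rational verification. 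There is no conceptual obstacle here: the mathematical content resides in the gluing construction of Section~\ref{S70}, and the proof of the statement itself is a verification — the one genuine difficulty is bookkeeping, namely matching the algorithm's output to the stated sextic models up to $\QQ$-isomorphism and making the order computation fully rigorous.
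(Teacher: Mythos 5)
Your proposal is correct and follows essentially the same route as the paper: locate in the Cremona and Stein--Watkins databases pairs of elliptic curves with rational $4$- and $7$-torsion whose mod-$3$ Galois representations could match, apply the BHLS $3$-gluing algorithm, reduce the resulting models, and invoke the coprimality argument ($\gcd(9,28)=1$) from the preamble of Section~\ref{S70} to conclude that a point of order $28$ survives the $(3,3)$-isogeny. The extra intrinsic check you propose on the Jacobian side --- exhibiting $D_i$ with $28D_i=0$, $4D_i\neq0$, $14D_i\neq0$ --- is a sensible safeguard and is essentially what the paper records in a follow-up remark via Magma's \texttt{TorsionSubgroup}, though the paper's proof proper treats the BHLS algorithm plus the explicit pairs in Table~\ref{T:pairs} as sufficient.
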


\begin{proof}
Table~\ref{T:pairs} lists five pairs of elliptic curves over $\QQ$.  The first
curve in each pair has a rational torsion point of order~$4$, and the second of
each pair has a rational torsion point of order~$7$; torsion points of these 
orders are listed in the fourth column of the table. Applying the BHLS 
$3$-gluing algorithm to the $i$th pair and applying Magma's 
\texttt{ReducedMinimalWeierstrassModel} to the output gives us the genus-$2$ 
curve $C_{28,i}$ listed in the statement of the theorem.
\end{proof}

\begin{table}[ht]
\begin{center}
\begin{tabular}{llll}
\toprule
\#& Cremona label    & Equation                                     & Torsion point\\
\midrule
1 & 182a1    & $y^2 = x^3 +    1122741 x +       310814982$ & $(   891,   44928)$ \\
  & 26b1     & $y^2 = x^3 -       3483 x +          121014$ & $(    27,     216)$ \\[1ex]
2 & 294c1    & $y^2 = x^3 -     255339 x -       109668762$ & $(  1659,   63504)$ \\
  & 294b2    & $y^2 = x^3 -     182763 x +        31201254$ & $(   219,    1296)$ \\[1ex]
3 & 490h1    & $y^2 = x^3 +     146853 x +        34506486$ & $(   315,   10584)$ \\
  & 490k2    & $y^2 = x^3 +    1190133 x +       257487174$ & $(  -117,   10800)$ \\[1ex]
4 & 1518s1   & $y^2 = x^3 -  215054379 x +   2013507848358$ & $(  9579,  912384)$ \\
  & 858k1    & $y^2 = x^3 - 7483623723 x + 249446508217254$ & $( 48459,  769824)$ \\[1ex]
5 & 193930c1 & $y^2 = x^3 - 8212844907 x - 260196865770906$ & $(-51237, 5108400)$ \\
  & 4730k1   & $y^2 = x^3 - 7234611147 x + 236852477159814$ & $( 48843,  118800)$ \\
\bottomrule
\end{tabular}
\end{center}
\vskip1ex
\caption{Pairs of elliptic curves that can be glued along their $3$-torsion.}
\label{T:pairs}
\end{table}
       
\begin{remark}
In fact, Magma's \texttt{TorsionSubgroup} command shows that the Jacobian of
each of these curves has no rational torsion other than that generated by the
point of order~$28$.
\end{remark}

\begin{remark}
As we noted, a result of Kani shows that an anti-isometry 
$\psi\colon E_1[3]\to E_2[3]$ between the $3$-torsion of two elliptic curves
gives rise (via the construction sketched earlier) to a nonsingular curve if
and only if $\psi$ is not the restriction to $E_1[3]$ of a $2$-isogeny from
$E_1$ to~$E_2$.  Therefore, there are two ways for a prime $p$ to be a prime of
bad reduction for a curve over $\QQ$ produced in this way: First, $p$ may be a
prime of bad reduction for $E_1$ or~$E_2$; and second, $p$ may be a prime such 
that the reduction of $\psi$ modulo $p$ is the restriction to $E_{1,p}[3]$ of a 
$2$-isogeny $E_{1,p}\to E_{2,p}$ of the elliptic curves modulo~$p$.  For 
example, the curve $C_{28,4}$ has bad reduction at $439$ for the latter 
reason.  The bad reduction of this curve at $439$ makes the comparatively large
coefficients of its reduced model more understandable.
\end{remark}

\subsection{Torsion points of order \texorpdfstring{$7$}{7} 
                                and \texorpdfstring{$8$}{8}}
For $N_1=7$ and $N_2 = 8$ we found no pairs of elliptic curves
from the Cremona and Stein--Watkins databases that we could
glue together. 

\subsection{Torsion points of order \texorpdfstring{$7$}{7} 
                                and \texorpdfstring{$10$}{10}}
For $N_1=7$ and $N_2 = 10$ we found exactly one pair of elliptic curves
from the Cremona and Stein--Watkins databases that we
could glue together.  

\begin{theorem}
Let $C_{70}$ be the genus-$2$ curve
\[
y^2 + (2 x^3 - 3 x^2 - 41 x + 110) y = x^3 - 51 x^2 + 425 x + 179
\]
over $\QQ$. The Jacobian of $C_{70}$ has a rational torsion point of order~$70$.
\end{theorem}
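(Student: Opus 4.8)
The plan is to follow the gluing construction set up at the start of this section. First I would exhibit the unique pair $(E_1,E_2)$ of elliptic curves over $\QQ$ turned up by the database search, with $E_1$ carrying a rational point $P$ of order~$7$ and $E_2$ carrying a rational point $Q$ of order~$10$, together with explicit Weierstrass equations and coordinates for $P$ and~$Q$, recorded in a small table in the style of Table~\ref{T:pairs}. As a preliminary sanity check I would verify that $a_\ell(E_1)\equiv a_\ell(E_2)\pmod 3$ for the first several primes $\ell$ of good reduction; this is the numerical shadow of the mod-$3$ Galois representations of $E_1$ and $E_2$ being isomorphic, which is necessary for an anti-isometry $\psi\colon E_1[3]\to E_2[3]$ to exist.

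Next I would run the BHLS $3$-gluing algorithm (Algorithm~5.4 of~\cite{BrokerHoweEtAl2014}) on the ordered pair $(E_1,E_2)$. As explained above, its output is the list of all genus-$2$ curves over $\QQ$ whose polarized Jacobians are $(3,3)$-isogenous over $\QQ$ to $E_1\times E_2$ with the product polarization; applying Magma's \texttt{ReducedMinimalWeierstrassModel} to the (nonempty) output and extracting one model yields the curve $C_{70}$ of the statement. That the algorithm returns a genuine genus-$2$ curve rather than a singular one is automatic from Kani's criterion: a singular output would force $\psi$ to be the restriction to $E_1[3]$ of a $2$-isogeny $E_1\to E_2$, and the very fact that we obtain a smooth genus-$2$ model shows this is not the case here.

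Finally I would transport the torsion point. Write $A=\Jac C_{70}=(E_1\times E_2)/G$, where $G$ is the graph of $\psi$, and let $\varphi\colon E_1\times E_2\to A$ be the quotient isogeny. Since $\psi$ is defined over $\QQ$, the subgroup $G$ is Galois-stable and $\varphi$ is defined over $\QQ$; moreover $\ker\varphi=G$ has exponent~$3$, so $\varphi$ is injective on the prime-to-$3$ torsion of $E_1\times E_2$. The point $(P,Q)\in(E_1\times E_2)(\QQ)$ has order $\operatorname{lcm}(7,10)=70$, which is coprime to~$3$, so $\varphi(P,Q)$ is a rational point of $\Jac C_{70}$ of order exactly~$70$. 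As with the curves of Theorem~\ref{T:28}, I would then double-check with Magma's \texttt{TorsionSubgroup} that the full rational torsion subgroup of $\Jac C_{70}$ is cyclic of order~$70$.

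There is no genuine mathematical obstacle: the heavy lifting---that a maximal isotropic $3$-gluing of two elliptic curves is again a Jacobian, and that it can be computed explicitly over $\QQ$---is already supplied by Kani's theorem and the BHLS algorithm, and the only subtle point, transporting an order-$70$ point across an isogeny of exponent-$3$ kernel, is immediate. The real effort is the finite computation: searching the combined Cremona and Stein--Watkins databases for the single compatible $(7,10)$ pair, running the gluing algorithm, and reducing the resulting model. I therefore expect the proof to consist essentially of displaying that pair of curves and their torsion points and observing that the construction together with the exponent-$3$ argument above does the rest.
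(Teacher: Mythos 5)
Your proposal is correct and follows essentially the same route as the paper: exhibit $E_1 = 858\mathrm{k}1$ with its rational $7$-torsion point and $E_2 = 66\mathrm{c}2$ with its rational $10$-torsion point, run the BHLS $3$-gluing algorithm to get the genus-$2$ curve, reduce the model, and observe that the order-$70$ point $(P,Q)$ survives the quotient because the kernel has exponent $3$. The only difference is cosmetic: the paper relegates the ``coprime-to-$3$ torsion is preserved'' reasoning to the general discussion preceding the theorem and, in a remark, also names an explicit degree-zero divisor on $C_{70}$ representing a $70$-torsion class, whereas you spell out the injectivity-on-prime-to-$3$-torsion argument inline.
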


\begin{proof}
Let $E_1$ and $E_2$ be the following two elliptic curves:
\begin{align*}
&\text{(858k1)} &  y^2 &= x^3 - 7483623723 x + 249446508217254\\
&\text{(66c2)}  &  y^2 &= x^3 + 149013 x + 25726950.
\end{align*}
Then $(48459,769824)$ is a torsion point of order $7$ on~$E_1$, and 
$(147, 7128)$ is a torsion point of order $10$ on~$E_2$. The BHLS $3$-gluing 
algorithm, applied to these curves, gives a single genus-$2$ curve as its 
output.  We obtain the equation for $C_{70}$ given in the theorem by applying
Magma's \texttt{ReducedMinimalWeierstrassModel} function to the curve produced 
by the BHLS algorithm, and then shifting $y$ by polynomials in $x$, and 
shifting $x$ by constants, in order to reduce the size of coefficients.
\end{proof}

\begin{remark}
Let $P_1$ and $P_2$ be the two points at infinity on $C_{70}$.  One can check
that the divisor $(2,17) + (4,23) - P_1 - P_2$ represents a point of 
order~$70$ on the Jacobian of~$C_{70}$. 
\end{remark}

\begin{remark}
Magma's \texttt{TorsionSubgroup} command shows that the Jacobian has no 
rational torsion points other than the multiples of this $70$-torsion point.
\end{remark}
       
\section{Torsion points of order \texorpdfstring{$48$}{48}}
\label{S48}

In this section, we use the techniques of~\cite{HoweLeprevostEtAl2000} to
produce a family of genus-$2$ curves over $\QQ$, parameterized by an elliptic
curve over $\QQ$ of rank~$2$, whose members all have Jacobians with a rational
torsion point of order~$48$.  Prior to this work, only two examples of such
curves had appeared in the 
literature~\cite{PlatonovPetrunin2012b}*{Theorem~3, p.~643}.

First we construct a $1$-parameter family of genus-$2$ curves whose Jacobians
have a rational torsion point of order $24$.  The construction works over any
field whose characteristic is neither $2$, $3$, nor $7$.  For the remainder of
this section we fix such a field $K$, and we let $\Kbar$ denote a separable 
closure of~$K$.

\begin{theorem}
\label{T:24} 
Let $s$ be an element of the field $K$, and set
\begin{align*}
c_4 &= -31 \left(s^4 + 42 s^2 - (32200/93) s - 147\right)\\
c_2 &= 2^8 \big(s^8 + 84 s^6 - (3472/3) s^5 + 1470 s^4 
          - 48608 s^3 + 53508 s^2 + 170128 s + 21609\big)\\
c_0 &= (2^{20} \cdot7/3) s  (s^2 + 7)^3  (s^2 + 63) \\
d   &= s^4 + 42 s^2 + (1736/3) s - 147.
\end{align*}
Suppose  $c_0 d \ne 0$.  Then the equation
\[
d y^2 = x^6 + c_4 x^4 + c_2 x^2 + c_0
\]
defines a nonsingular genus-$2$ curve $C_{24}^s$ over $K$, and the Jacobian of
$C_{24}^s$ has a $K$-rational torsion point of order~$24$.
\end{theorem}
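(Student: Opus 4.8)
The plan is to exhibit the curve $C_{24}^s$ as arising from the $2$-gluing construction of \cite{HoweLeprevostEtAl2000}, so that a point of order $24$ on its Jacobian comes from points of order $3$ and $8$ on a pair of elliptic curves glued along their $2$-torsion. Concretely, a sextic model $d y^2 = x^6 + c_4 x^4 + c_2 x^2 + c_0$ that is even in $x$ admits the two quotient maps $(x,y)\mapsto(x^2,y)$ and $(x,y)\mapsto(x^2, xy)$ to elliptic curves $E_1\colon d y^2 = u^3 + c_4 u^2 + c_2 u + c_0$ and $E_2\colon d y^2 = u(u^3+c_4u^2+c_2u+c_0)$ — equivalently $d^3 y^2 = (du)(\,(du)^3 + \ldots\,)$ after clearing — and the Jacobian of $C_{24}^s$ is isogenous to $E_1\times E_2$ via these maps. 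The first thing I would do is write down these two elliptic quotients explicitly and verify that the displayed $c_i$ and $d$ were in fact reverse-engineered so that $E_1$ has a rational $3$-torsion point and $E_2$ has a rational $8$-torsion point (or vice versa).

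The key computational steps are then: (i) put $E_1$ and $E_2$ in Weierstrass form and locate on each a rational torsion point of the required order; a rational $3$-torsion point on $E_1$ amounts to a rational inflection-type condition that one checks by exhibiting the point and verifying $3P = O$ via the group law, while a rational $8$-torsion point on $E_2$ is checked similarly (or by producing the $2$-, $4$-, and $8$-division data). (ii) Invoke the $2$-gluing formalism: since $C_{24}^s$ is obtained by gluing $E_1$ and $E_2$ along a common $2$-torsion structure, its Jacobian receives an isogeny from $E_1\times E_2$ whose kernel meets the torsion in a controlled way, so that the point of order $\mathrm{lcm}(3,8)=24$ on $E_1\times E_2$ maps to a point of order $24$ on $\Jac C_{24}^s$ — here one must check that the isogeny does not kill or reduce the order of this particular point, which is automatic because the kernel is a $2$-group and $24$'s odd part $3$ is untouched while the $8$-part survives an appropriate $2$-isogeny. (iii) Verify the nonsingularity claim: the sextic $x^6 + c_4 x^4 + c_2 x^2 + c_0$ has distinct roots precisely when its discriminant is nonzero, and one shows this discriminant is a unit multiple of a power of $c_0 d$ (or at least is nonvanishing under the hypothesis $c_0 d \ne 0$, away from characteristics $2,3,7$), so the curve has genus $2$.

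The main obstacle I expect is step (iii) together with the bookkeeping of which $2$-isogeny is in play in step (ii): one has to be careful that the gluing isogeny $E_1\times E_2\to\Jac C_{24}^s$ is compatible with the choice of $8$-torsion point, i.e.\ that the relevant point of order $8$ on $E_2$ is not in the kernel of the induced map and does not merely map to a point of order $4$. In the \cite{HoweLeprevostEtAl2000} setup the gluing is along the full $2$-torsion via an isomorphism of $2$-torsion group schemes, and the dual isogeny has kernel of order $4$; tracing a generator of the $\ZZ/8\ZZ$ through this requires knowing how the $2$-torsion of $E_2$ sits inside $\langle Q\rangle$ where $4Q$ is the $2$-torsion point being glued, and confirming $4Q$ is \emph{not} the particular $2$-torsion point that dies. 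Once that compatibility is pinned down, the order-$24$ conclusion follows formally, and the remaining work — the explicit torsion computations on $E_1$ and $E_2$, and the discriminant computation — is routine polynomial algebra that the carefully chosen parametrization by $s$ is designed to make come out cleanly.
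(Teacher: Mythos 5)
Your proposal is essentially the paper's $2$-gluing strategy, just run in the reverse direction: the paper starts from a fixed elliptic curve $F\colon y^2 = x^3 - 31 x^2 + 256 x$ with a rational $8$-torsion point $Q = (32, 96)$ and a one-parameter family $E\colon y^2 = x(x^2 + ax + b)$ with a rational $6$-torsion point $P$ satisfying $3P = (0,0)$, glues them along $E[2]\cong F[2]$ via \cite{HoweLeprevostEtAl2000}*{Proposition~4}, and then derives the displayed sextic by a coordinate change; you propose to start from the sextic, extract the two elliptic quotients, and verify their torsion. Both directions prove the theorem, but a few details in your plan need adjustment. First, the paper's $E$ carries a $6$-torsion point rather than a bare $3$-torsion point, and this matters for the gluing: the $2$-torsion point $3P = (0,0)$ is exactly what matches $F$'s rational $2$-torsion point $4Q = (0,0)$; the $24$-torsion point on the Jacobian is the image of $(2P, Q)$, with $2P$ of order $3$. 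Second, your step~(ii) worry about ``which $2$-torsion point dies'' is a non-issue in the $(2,2)$-gluing picture: the kernel $G$ of $E\times F\to \Jac C_{24}^s$ is the graph of an isomorphism $\psi\colon E[2]\to F[2]$, hence meets $E\times\{O\}$ and $\{O\}\times F$ only in the identity, so both embeddings $E\hookrightarrow\Jac C_{24}^s$ and $F\hookrightarrow\Jac C_{24}^s$ are injective and no torsion point on either factor loses order; the correct thing to check (and what the paper checks) is simply that no nonzero multiple of $(2P, Q)$ lies in $G$, which reduces to $(O, 4Q)\notin G$, i.e.\ to $4Q \ne O$. Third, for nonsingularity the paper does not compute a sextic discriminant at all; it invokes Kani's criterion, noting that $d\ne 0$ ensures $\psi$ is not the restriction of an isomorphism $E\to F$ while $c_0\ne 0$ ensures $f$ is separable, so the glued curve is automatically nonsingular of genus~$2$. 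Your proposed factorization of the sextic discriminant as a unit times a power of $c_0 d$ is not established in the paper and would need independent verification if you pursued the direct route.
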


\begin{proof}
Let $g$ be the polynomial $x^3 - 31 x^2 + 256 x$, so that the roots of $g$ in
$\Kbar$ are
\[
\beta_1 = 0, \quad 
\beta_2 = (31 - 3 r)/2, \quad\text{and}\quad 
\beta_3 = (31 + 3 r)/2,
\]
where $r\in\Kbar$ is a square root of~$-7$.  Let $F$ be the elliptic curve
$y^2 = g$ over $K$.  Note that $Q = (32,96)$ is a torsion point of order~$8$ 
on~$F$, and that $4Q = (0,0)$.

Let $s$ be as in the statement of the theorem.  Set
\[ 
a = \frac{-8 (s^4 + 42 s^2 - 147)}{(s^2 + 63)^2}
    \quad\text{and}\quad
b = \frac{16 (s^2 + 7)^3}{(s^2 + 63)^3},
\]
and let $f = x(x^2 + ax + b)$, so that the roots of $f$ in $\Kbar$ are
\[
\alpha_1 = 0, \quad 
\alpha_2 = \frac{4 (s + r)^3 (s - 3 r)}{(s^2 + 63)^2}, \quad\text{and}\quad
\alpha_3 = \frac{4 (s - r)^3 (s + 3 r)}{(s^2 + 63)^2}.
\]
The assumption that $c_0 d\ne 0$ shows that $f$ is separable.  Let $E$ be the
elliptic curve $y^2 = f$, and note that 
\[
P = \left(\frac{4(s^2 + 7)}{s^2 + 63}, \frac{224(s^2 + 7)}{(s^2 + 63)^2}\right)
\]
is a torsion point of order $6$ on $E$, and that $3P = (0,0)$.

Let $\psi\colon E[2]\to F[2]$ be the Galois-module isomorphism that sends 
$(\alpha_i,0)$ to $(\beta_i, 0)$, for $i=1,2,3$.  We check that the condition 
that $d\ne 0$ shows that $\psi$ is not the restriction to $E[2]$ of an 
isomorphism $E\to F$.

Proposition~4 (p.~324) of~\cite{HoweLeprevostEtAl2000} shows how to glue $E$ 
and $F$ together along their $2$-torsion subgroups using $\psi$ to get a 
genus-$2$ curve $C$ whose Jacobian is isomorphic to $(E\times F)/G$, where $G$
is the graph of $\psi$. The formulas 
in~\cite{HoweLeprevostEtAl2000}*{Proposition~4} show that $C$ is given by an 
equation $y^2 = h$, for an explicit sextic polynomial $h\in K[x]$. If we take 
that model for $C$ and replace $x$ and $y$ with
\[
\frac{x}{2^3\cdot (s^2+63)}
\quad\text{and}\quad
\frac{2^{16} \cdot 3^3 \cdot 7^3 \cdot s  (s^2 + 7)^3  d^2 \, y}{(s^2 + 63)^{11}},
\]
respectively, we wind up with the equation for $C_{24}^s$ in the statement of
the theorem.

Let $R$ be the point $(2P,Q)$ on $E\times F$.  The smallest positive integer
$n$ such that $nR$ lies in the kernel $G$ of the natural map
$E\times F\to \Jac C_{24}^s$ is $n = 24$, so the image of $R$ in 
$(\Jac C_{24}^s)(K)$ is a point of order~$24$.
\end{proof}

Next we show that for some values of $s$, the point of order $24$ on
$(\Jac C_{24}^s)(K)$ constructed at the end of the preceding proof is the 
double of a point of order~$48$.

\begin{theorem}
\label{T:48}
Let $s$ be an element of the field $K$ such that the quantity $c_0 d$ from
Theorem~\textup{\ref{T:24}} is nonzero.  Let $D$ be the elliptic curve 
$y^2 = x^3 + 14 x^2 + 196 x$ over~$K$.  Suppose there a nonzero point 
$(z,w)\in D(K)$ such that
\begin{equation}
\label{EQ:s}
s = -21 \ \frac{(z^2 + 196) (z^2 + 56 z + 196) + 32 (z + 14) (z - 14) w}
          {z^4 - 896 z^3 - 24696 z^2 - 175616 z + 38416}.
\end{equation}
Then there is a $K$-rational point of order $48$ on the Jacobian of the curve
$C_{24}^s$ from Theorem~\textup{\ref{T:24}}.
\end{theorem}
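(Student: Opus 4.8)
The plan is to show that, under the parametrization \eqref{EQ:s}, the point $R = (2P, Q)$ of order $24$ on $(\Jac C_{24}^s)(K)$ constructed in the proof of Theorem~\ref{T:24} becomes divisible by $2$, so that $\Jac C_{24}^s$ acquires a $K$-rational point of order $48$. First I would recall the explicit description of the $24$-torsion point: it is the image of $R = (2P, Q) \in (E \times F)(K)$ under the quotient map $E \times F \to \Jac C_{24}^s = (E \times F)/G$, where $G$ is the graph of the $2$-torsion isomorphism $\psi\colon E[2] \to F[2]$. A point of $(\Jac C_{24}^s)(K)$ is the image of an element $(U, V) \in (E \times F)(\Kbar)$ fixed by $G_K$ modulo $G$, and its class is divisible by $2$ in $\Jac C_{24}^s$ precisely when there is such a representative with $(2U, 2V) \equiv (2P, Q) \pmod G$, i.e. $2U = 2P + T_E$ and $2V = Q + T_F$ for some $(T_E, T_F) \in G$; since $G$ is the graph of $\psi$, this means $T_F = \psi(T_E)$. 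I would therefore reduce the divisibility question to: when is there a $\Kbar$-point $U$ on $E$ with $2U - P \in E[2]$ and a $\Kbar$-point $V$ on $F$ with $2V - Q = \psi(2U - P)$, the pair $(U,V)$ being $G_K$-stable modulo $G$?

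Next I would handle the two factors separately. On $F$, the point $Q = (32, 96)$ has order $8$, so halving $Q$ (allowing a shift by $2$-torsion) is always possible over $\Kbar$: the relevant condition is whether some $V$ with $2V \in Q + F[2]$ can be chosen $K$-rationally, or can be matched with the $E$-side to give a $G_K$-stable pair mod $G$. The substantive constraint comes from the $E$-side: $2P$ has order $3$, and asking for $U$ with $2U = 2P + T_E$ ($T_E \in E[2]$) is a $2$-descent condition on $E$. Concretely, dividing $P$ (up to a $2$-torsion shift) by $2$ on $E\colon y^2 = x(x^2 + ax + b)$ is governed by whether certain quantities built from the coordinates of $P$ (and from $a, b$) are squares in $K$; this is exactly where a second elliptic curve enters. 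I would show that the curve $D\colon y^2 = x^3 + 14x^2 + 196x$ is (up to twist and isogeny) the curve whose $K$-points parametrize the solutions to this halving problem: a nonzero $(z, w) \in D(K)$ should, via the rational map \eqref{EQ:s}, produce a value of $s$ for which the requisite square-root extractions on the $E$-side (and the matching $F$-side extraction dictated by $\psi$) all become $K$-rational and Galois-compatible, yielding an honest $K$-point $(U, V)$ whose image in $\Jac C_{24}^s$ has order $48$.

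In practice I expect the cleanest route is \emph{verification rather than derivation}: take the displayed formula for $s$ in terms of $(z, w)$, substitute into the formulas of Theorem~\ref{T:24} for $a$, $b$, $d$, and the coordinates of $P$, and then exhibit an explicit point $U \in E(K)$ — with coordinates given as rational functions of $z$ and $w$, simplified using the relation $w^2 = z^3 + 14z^2 + 196z$ — such that $2U$ equals $2P$ plus a specified $2$-torsion point of $E$. One then checks that the corresponding point $V \in F(K)$ with $2V = Q + \psi(2U - P)$ is likewise $K$-rational (its coordinates again rational in $z, w$), so that $(U, V)$ descends to a genuine $K$-rational point $\bar R'$ of $\Jac C_{24}^s$, and finally that $2\bar R' = \bar R$, the order-$24$ point, whence $\bar R'$ has order $48$. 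Each of these is a finite check in the function field $K(z, w) = K(z)[w]/(w^2 - z^3 - 14z^2 - 196z)$ and can be confirmed by direct computation (e.g. in Magma), exactly in the spirit of the proof of Theorem~\ref{T:24}.

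The main obstacle is bookkeeping, not conceptual: one must pin down which $2$-torsion shifts $T_E \in E[2]$ (equivalently, which of the finitely many square classes in the $2$-descent on $E$) correspond to the parametrization \eqref{EQ:s}, and simultaneously track the matching shift $\psi(T_E) \in F[2]$ so that the $E$- and $F$-halving data glue to a $G_K$-stable pair modulo $G$ rather than merely living over $\Kbar$. Getting the twisting right — the factor $-21$ and the particular quartic denominator in \eqref{EQ:s} encode precisely this alignment of square classes between the $E$-side and the $F$-side — is where the real work lies; once the correct $(z,w) \mapsto (U, V)$ formulas are in hand, confirming that the image in $\Jac C_{24}^s$ has order $48$ is routine.
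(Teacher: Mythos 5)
Your general framework is right: express $\Jac C_{24}^s$ as $(E\times F)/G$, characterize when a class in the Jacobian is a double in terms of $\Kbar$-points $(U,V)$ on $E\times F$ that are Galois-stable modulo $G$, and reduce to a $2$-descent condition in $L^*/L^{*2}$ with $L = K[T]/(T^2+7)$. This is exactly the machinery that Proposition~12 of Howe--Lepr\'evost--Poonen packages, and the paper simply invokes that proposition. But you aim the halving argument at the wrong point, and that is a genuine gap rather than bookkeeping.

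You propose to show that $\overline{(2P,Q)}$, the order-$24$ point itself, is divisible by $2$. Since $2P$ has odd order $3$, it already lies in $2E(K)$ (indeed $2P = 2\cdot 4P$), so its class in $E(K)/2E(K)$ is trivial. The Howe--Lepr\'evost--Poonen criterion for $\overline{(2P,Q)}$ to be a double therefore collapses to the single condition that $Q$ have trivial image in $L^*/L^{*2}$, i.e.\ that $32 - B = (33+3\rho)/2$ be a square in $L$. This condition does not involve $s$ at all --- it is a fixed property of the fixed curve $F$ and the fixed point $Q$ --- so the parametrization~\eqref{EQ:s} cannot be arranging it. And in fact it fails over $\QQ$: the norm of $(33+3\rho)/2$ is $(33^2 + 9\cdot 7)/4 = 288 = 2\cdot 12^2$, which is not a rational square, so $32-B$ is not a square in $L$. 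The order-$24$ point is simply not being halved.

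What the paper halves instead is the order-$8$ point $\overline{(3P,Q)}$. Here $3P = (0,0)$ is a nontrivial $2$-torsion point, so $\iota(3P)$ is the class of $-A$, and the criterion $\iota(3P) = \iota'(Q)$ becomes the genuinely $s$-dependent condition that $A(B-32)$ be a square in $L$. After stripping the obvious square factor $\ell^2$, this is exactly the condition that the elliptic curve $D$ and the formula~\eqref{EQ:s} are built to satisfy. A rational half of $\overline{(3P,Q)}$ then gives a point of order $16$ in $J(K)$, which together with the order-$24$ point $\overline{(2P,Q)}$ forces a point of order $\mathrm{lcm}(16,24) = 48$. So redirect your halving construction at $(3P,Q)$; the remainder of your outline --- Galois-stability modulo $G$, matching square classes on the $E$- and $F$-sides, and verifying by symbolic computation in the function field of $D$ --- then matches the paper's argument.
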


\begin{remark}
\label{R:gens}
In the case $K = \QQ$, the Mordell--Weil group of $D$ has rank $2$; it is 
generated by the $2$-torsion point $P_1 = (0,0)$ and the independent points 
$P_2 = (7,-49)$ and $P_3=(16,-104)$ of infinite order. The right-hand side of 
equation~\eqref{EQ:s} is a degree-$16$ function on $D$, so each $s\in\QQ$ 
arises from at most $16$ points of $D(\QQ)$. It follows that there are 
infinitely many genus-$2$ curves over $\QQ$ whose Jacobians have a rational 
torsion point of order $48$.
\end{remark}

\begin{proof}[Proof of Theorem~\textup{\ref{T:48}}]
Let notation be as in the proof of Theorem~\ref{T:24}, and let $A$ be the
abelian surface $E\times F$.  The point $(3P,Q)$ on $A$ maps to a point of 
order $8$ on $J= \Jac C_{24}^s$. Proposition~12 (p.~338) 
of~\cite{HoweLeprevostEtAl2000} gives conditions under which nonrational
points on $A$ will map to rational points of~$J$.  In particular, we can use 
the proposition to determine when there is a rational point of $J$ whose double
is the image of $(3P,Q)$ in $J$; if such a point exists, then $J(K)$ will have
a point of order~$16$, and hence also a point of order $48$.

Proposition~12 of~\cite{HoweLeprevostEtAl2000} deals with the Galois cohomology
groups $H^1(G_K, E[2])$ and $H^1(G_K, F[2])$. As is summarized 
in~\cite{HoweLeprevostEtAl2000}*{\S 3.7}, the group $H^1(G_K, E[2])$ can be 
identified with the kernel of the norm map
\[
L_f^*/L_f^{*2}\to K^*/K^{*2},
\]
where $L_f$ is the $K$-algebra $K[T]/f(T)$. Likewise, $H^1(G_K, F[2])$ can be 
identified with the kernel of the norm
\[
L_g^*/L_g^{*2}\to K^*/K^{*2}.
\]
The polynomials $f$ and $g$ both have linear factors over $K$, and their other
roots involve the square root $r$ of $-7$, so both of these kernels are 
isomorphic to $L^*/L^{*2}$, where $L = K[T]/(T^2 + 7)$.  Let $\rho$ denote the
image of $T$ in $L$. If $K$ does not contain $r$ then there is an isomorphism
$L\cong K(r)$ that sends $\rho$ to~$r$. If $K$ does contain $r$ then there is
an isomorphism $L\cong K\times K$ that sends $\rho$ to $(r,-r)$.

Let $A$ and $B$ be the elements
\[
\frac{4 (s + \rho)^3 (s - 3 \rho)}{(s^2 + 63)^2}
\quad\text{and}\quad
\frac{31 - 3 \rho}{2}
\]
of $L$, corresponding to the roots $\alpha_2$ and $\beta_2$ of $f$ and~$g$. 
Then the map 
\[
\iota\colon E(K)/2E(K) \to L^*/L^{*2}
\]
from~\cite{HoweLeprevostEtAl2000}*{Proposition~12} is given by sending the class
of a finite point $(x,y)$ of $E(K)$ to the class of $x - A$ in $L^*/L^{*2}$, 
provided that $x\ne \alpha_2,\alpha_3$.  Likewise, the map 
\[
\iota'\colon F(K)/2F(K) \to L^*/L^{*2}
\]
sends the class of $(x,y)$ in $F(K)$ to the class of $x-B$, provided that 
$x\ne \beta_2,\beta_3$.

Proposition~12 of~\cite{HoweLeprevostEtAl2000} says that the image of $(3P,Q)$
in $J(K)$ will be the double of a point in $J(K)$ if $\iota(3P) = \iota'(Q)$ 
in $L^*/L^{*2}$. Since $\iota(3P)$ is the class of $0 - A$ mod squares, and 
$\iota'(Q)$ is the class of $32 - B$ mod squares, we would like to check
whether $A(B - 32)$ is a square in $L$.

We compute that $A(B-32) = \ell^2 m$, where
\[
\ell  =  \frac{(1 -\rho)(s + \rho)^2}{s^2 + 63}
\quad\text{and}\quad
m  =  \frac{3(5 - \rho)(s - 3\rho)}{2(s + \rho)},
\]
so $A(B-32)$ is a square if and only if $m$ is a square.  But using the 
expression for $s$ in terms of $z$ and $w$ from the statement of the theorem,
we find that $m = n^2$, where
\[
n = 6 \ \frac{ (7 + 11 \rho) w - 4 z^2 + 784}
          {8 z^2 + 7 (1 - 3 \rho) z + 1568}.
\]          
Therefore, there is a $K$-rational $48$-torsion point on $J$.
\end{proof}

\begin{remark}
One can check that the function on $D$ given by the right-hand side of 
equation~\eqref{EQ:s} is invariant under translation by the $2$-torsion point
$P_1 = (0,0)$.
\end{remark}

\begin{corollary}
The Jacobian of each of the following genus-$2$ curves over $\QQ$ has a 
rational torsion point of order $48$\textup{:}
\begin{align*}
y^2 + (x^2 + x) y &= x^6 - 3 x^5 - 5 x^4 + 14 x^3 +  8 x^2 - 16 x     \\
y^2 + (x^2 + x) y &= x^6 -   x^5 + 5 x^4 - 11 x^3 + 10 x^2 -  6 x + 2 \\
\begin{aligned}
y^2 + (x^2 + x) y\\ \ \ 
\end{aligned}     &\begin{aligned}
                    & \!= 1217 x^6 - 3651 x^5 + 15717859 x^4 - 31429634 x^3 \\[-1ex]
                    &\qquad  + 60403483004 x^2 - 60387768796 x + 80875050306064.\\
                    \end{aligned}
\end{align*}
\end{corollary}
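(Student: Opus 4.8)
The plan is to obtain the corollary as a direct application of Theorems~\ref{T:24} and~\ref{T:48}: for each of the three listed curves I would exhibit an explicit rational point on the auxiliary elliptic curve $D\colon y^2 = x^3 + 14 x^2 + 196 x$, push it through the degree-$16$ function on the right-hand side of~\eqref{EQ:s} to obtain a value $s\in\QQ$ with $c_0 d\ne 0$, and then check that the curve $C_{24}^s$ of Theorem~\ref{T:24}, after reduction of its Weierstrass model, is exactly the curve in the statement.

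Concretely, I would start from the generators of $D(\QQ)$ recorded in Remark~\ref{R:gens} --- the $2$-torsion point $P_1=(0,0)$ and the independent infinite-order points $P_2=(7,-49)$ and $P_3=(16,-104)$ --- and run through points $(z,w)$ of small height in the subgroup they generate (for instance $P_2$, $P_3$, $P_2\pm P_3$, $2P_2$, $2P_3$, and so on), noting that translation by $P_1$ leaves $s$ unchanged by the final Remark. For each such point I would evaluate~\eqref{EQ:s} to get a candidate $s\in\QQ$, discard the finitely many values for which $c_0 d = 0$, and substitute the surviving $s$ into the formulas for $c_4,c_2,c_0,d$ in Theorem~\ref{T:24} to produce the sextic model $d y^2 = x^6 + c_4 x^4 + c_2 x^2 + c_0$ of $C_{24}^s$. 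Theorem~\ref{T:48} then guarantees that the Jacobian of each such $C_{24}^s$ has a rational point of order~$48$. I expect the three curves in the statement to come from the three smallest-height points $(z,w)$, modulo the symmetries above, that pass the $c_0 d\ne 0$ test.

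Finally I would clear denominators, minimize, and apply the reduction step used in the earlier proofs --- Magma's \texttt{ReducedMinimalWeierstrassModel} --- to each sextic; completing the square turns the reduced model into the form $y^2 + (x^2+x)y = (\text{sextic})$ displayed in the statement, and matching coefficients finishes the proof. The main obstacle is bookkeeping rather than mathematics: the substitution in Theorem~\ref{T:24} (scaling $x$ by $2^3(s^2+63)$ and $y$ by a high power of $(s^2+63)$ times $s(s^2+7)^3 d^2$) followed by minimization produces very large intermediate coefficients --- as the third listed curve makes plain --- so I would organize the verification so that, for each chosen point of $D$, the identification of the reduced model with the stated curve is confirmed by writing down an explicit isomorphism of genus-$2$ curves over $\QQ$, rather than relying on the reduction algorithm to return a unique normal form.
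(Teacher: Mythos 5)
Your approach matches the paper's: plug low-height points of $D(\QQ)$ into equation~\eqref{EQ:s} to get values of $s$, then reduce the resulting $C_{24}^s$. The only thing worth flagging is that the paper obtains the three curves from precisely the points $P_1$, $P_2$, $P_3$ of Remark~\ref{R:gens}, giving $s = -21$, $3$, and $21/31$ respectively; your enumeration treats $P_1 = (0,0)$ purely as a symmetry of the function in~\eqref{EQ:s} and omits it from the list of candidate inputs, so as written your search would miss $C_{24}^{-21}$ (the first listed curve). Since $P_1$ is a nonzero point of $D$ it is a legitimate input to Theorem~\ref{T:48}, and evaluating~\eqref{EQ:s} at $(z,w)=(0,0)$ gives $s=-21$ directly. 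Once that is included, your verification-by-explicit-isomorphism plan is sound and follows the same route as the paper's proof.
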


\begin{proof}
These are reduced models of the curves $C_{24}^{-21}$, $C_{24}^3$, and 
$C_{24}^{21/31}$, which come from the values of $s$ obtained as in 
Theorem~\ref{T:48} from the points $P_1$, $P_2$, and $P_3$ of $D(\QQ)$ defined 
in Remark~\ref{R:gens}.
\end{proof}

\begin{remark}
The $1$-parameter family of curves in Theorem~\ref{T:24} was obtained by gluing
a fixed elliptic curve with an $8$-torsion point to a family of elliptic curves 
with a $6$-torsion point. One can also construct a more general family of
examples, as follows.

Given elements $t$ and $u$ of the field $K$, let $E$ be the elliptic curve
$F_6^t$ with a rational $6$-torsion point~$P$ defined 
in~\cite{HoweLeprevostEtAl2000}*{pp.~320--322}, and let $F$ be the curve
$F_8^u$ with a rational $8$-torsion point~$Q$. 
(This requires that $t$ and $u$ 
avoid a certain finite set of values, and for this discussion we tacitly assume 
that all such exceptional values are excluded.)
Then $E$ and $F$ can be glued
together along their $2$-torsion subgroups if their discriminants are equal to
one another, up to squares.  According 
to~\cite{HoweLeprevostEtAl2000}*{Table~6, p.~321}, this will be the case if
there is a nonzero $v\in K$ such that
\[
(9t + 1) / (t + 1) = (2u^2 - 1) v^2.
\]
Since $u$ and $v$ then determine~$t$, this gives us a family of
genus-$2$ curves with a rational point of order $24$ on their Jacobians,
parametrized by an open subset $V$ of the $(u,v)$-plane.  

Following the same argument as in the proof of Theorem~\ref{T:48}, we see 
that the $24$-torsion point on the Jacobian corresponding to a given $(u,v)$
pair will be the double of a rational point of order $48$ if and only if
a certain element of the quadratic algebra $L$ determined by the discriminants
of $E$ and $F$ is a square.  This condition can be rephrased as saying that
a given $(u,v)$ pair gives a curve with a $48$-torsion point on its Jacobian
if and only if $(u,v)$ is the image of a rational point under a map $U\to V$ 
from a surface $U$ to $V$.

The equations we derived for the surface $U$ are lengthier than we would like
to present here.  We looked for curves of small genus on $U$, and were able to
find a number of curves of genus~$1$, but none of genus~$0$. The family given
in Theorem~\ref{T:48} corresponds to the fiber of $U$ over the line $u = 1/3$
in~$V$.  (The variable $s$ in Theorem~\ref{T:24} is then $7v/3$.)
\end{remark}

\section{Curves with small coefficients}
\label{S:small}

In the literature, one finds several examples of genus-$2$ curves over $\QQ$
with torsion points of large order on their Jacobians and with models whose 
defining equations have coefficients of very small height.  Inspired by these
examples, we searched through a number of families of curves with small-height 
coefficients in search of further examples.  We found no new orders of torsion 
points, but we did find some new curves, as well as some small models of curves
already in the literature.  

We had the most success when searching for curves of the form
\[
y^2 + (a_3 x^3 + a_2 x^2 + a_1 x + a_0) y = b_2 x^2 + b_1 x + b_0.
\]
(Note that every genus-$2$ curve with a rational non-Weierstrass point has a
model of this form.) For this family, we let the coefficients run through the
integers from $-10$ through $10$.  (By changing the signs of $x$ and $y$, we 
could assume that $a_3$ was positive and $a_2$ nonnegative.) We limited our 
search to torsion orders for which there is \emph{not} a known infinite family
of curves with torsion points of that order; let us call such orders
\emph{interesting}.  For most curves, we could quickly show that the curve's
Jacobian had no interesting rational torsion by looking at the number of 
points on the Jacobians of the reductions of the curve modulo several small
primes of good reduction. For curves whose reductions did allow for the 
existence of torsion points of interesting order, we used Magma's 
\texttt{TorsionSubgroup} command to compute the actual torsion subgroup of the
Jacobian of the curve.

Table~\ref{T:small} gives the curves we found, the order of the torsion point
of largest order on the Jacobian, and, if applicable, a reference to where the
curve has appeared previously in the literature. In some of the examples, we
give a model where there is an $x^3$ term on the right-hand side of the curve's
equation, because allowing that term reduced the coefficient size or the
number of nonzero coefficients.

\begin{table}[ht]
\begin{center}
\begin{tabular}{cl@{\ $+$\ }r@{\ $=$\ }ll}
\toprule
Order & \multicolumn{3}{l}{Equation} & Reference\\
\midrule
$27$ & $y^2$&$ (6 x^3 + 3 x^2 + 3 x - 2) y $&$                 6 x$     & \cite{Leprevost1995}*{Th\'eor\`eme~1.2.1}\\
     & $y^2$&$ (  x^3         - 2 x + 1) y $&$   x^3$                   & New\\
     & $y^2$&$ (2 x^3 + 3 x^2 - 3 x + 2) y $&$ 6 x^3               + 6$ & New\\
     & $y^2$&$ (6 x^3 + 9 x^2 + 6 x - 1) y $&$       - 3 x^2$           & New\\
$28$ & $y^2$&$ (3 x^3 + 2 x^2       + 1) y $&$       -   x^2 -   x$     & \cite{PlatonovPetrunin2012a}*{Theorem~4}\\
     & $y^2$&$ (2 x^3 - 3 x^2 + 3 x + 4) y $&$                 4 x$     & \cite{PlatonovZhgunEtAl2013}*{Theorem~3}\\
$29$ & $y^2$&$ (2 x^3 - 2 x^2 -   x + 1) y $&$                   x$     & \cite{Leprevost1995}*{Th\'eor\`eme~1.2.1} \\
$33$ & $y^2$&$ (3 x^3 + 9 x^2 +   x + 2) y $&$               - 8 x$     & \cite{PlatonovPetrunin2012b}*{Corollary~1}\\
$34$ & $y^2$&$ (6 x^3 + 5 x^2       - 4) y $&$       - 4 x^2$           & \cite{ElkiesWeb}*{``$N=34$''}\\
$36$ & $y^2$&$ (6 x^3 - 3 x^2 -   x + 2) y $&$ 3 x^3 - 4 x^2 + 2 x$     & \cite{PlatonovPetrunin2012b}*{Theorem~2}\\
     & $y^2$&$ (6 x^3 + 3 x^2 -   x + 2) y $&$         2 x^2 + 2 x$     & \cite{PlatonovZhgunEtAl2013}*{Theorem~3}\\
$39$ & $y^2$&$ (  x^3         + 2 x - 1) y $&$ 3 x^3$                   & \cite{ElkiesWeb}*{``$N=39$''}\\
     & $y^2$&$ (6 x^3 + 6 x^2 - 7 x - 9) y $&$               - 2 x - 2$ & New\\
\bottomrule
\end{tabular}
\end{center}
\vskip1ex
\caption{Examples of genus-$2$ curves with torsion points of large order.}
\label{T:small}
\end{table}

With help from Reinier Br\"oker, we also searched specifically for genus-$2$
curves over $\QQ$ with rational $31$-torsion points on their Jacobian.  We
searched through all curves of the form $y^2 = f$, where $f\in\ZZ[x]$ is a
quintic or sextic with all coefficients bounded in absolute value by $20$.
We also searched through all genus-$2$ curves of the form $y^2 + gy = f$,
where $g$ and $f$ are polynomials in $\ZZ[x]$ of degree at most $3$ and~$6$,
respectively, and with all coefficients bounded in absolute value by $5$. 
We found no examples.

\begin{bibdiv}
\begin{biblist}

\bib{BrokerHoweEtAl2014}{misc}{
   author={Reinier Br\"oker},
   author={Everett W. Howe},
   author={Kristin E. Lauter}, 
   author={Peter Stevenhagen},
   title={Genus-$2$ curves and Jacobians with a given number of points},
   date={2014},
   note={\href{http://arxiv.org/abs/1403.6911}{arXiv:1403.6911 [math.NT]}.
        To appear in LMS J. Comput. Math.},
}

\bib{CremonaWeb}{misc}{
    author= {Cremona, John},
    title={Elliptic curve data},
    note={\url{http://homepages.warwick.ac.uk/staff/J.E.Cremona/ftp/data/INDEX.html}},
    date={retrieved 8 May 2014},
}    

\bib{ElkiesWeb}{misc}{
   author={Elkies, Noam D.},
   title={Curves of genus 2 over Q whose Jacobians are absolutely 
          simple abelian surfaces with torsion points of high order},
   note={\url{http://www.math.harvard.edu/~elkies/g2_tors.html}},
   date={retrieved 1 May 2014},
}   

\bib{Flynn1990}{article}{
   author={Flynn, E. V.},
   title={Large rational torsion on abelian varieties},
   journal={J. Number Theory},
   volume={36},
   date={1990},
   number={3},
   pages={257--265},
   issn={0022-314X},
   doi={\href{http://dx.doi.org/10.1016/0022-314X(90)90089-A}{10.1016/0022-314X(90)90089-A}},
}

\bib{HoweLauter2003}{article}{
   author={Howe, E. W.},
   author={Lauter, K. E.},
   title={Improved upper bounds for the number of points on curves over finite
          fields},
   journal={Ann. Inst. Fourier (Grenoble)},
   volume={53},
   date={2003},
   number={6},
   pages={1677--1737},
   issn={0373-0956},
   doi={\href{http://dx.doi.org/10.5802/aif.1990}{10.5802/aif.1990}},
}

\bib{HoweLeprevostEtAl2000}{article}{
   author={Howe, Everett W.},
   author={Lepr{\'e}vost, Franck},
   author={Poonen, Bjorn},
   title={Large torsion subgroups of split Jacobians of curves of genus two or 
          three},
   journal={Forum Math.},
   volume={12},
   date={2000},
   number={3},
   pages={315--364},
   issn={0933-7741},
   doi={\href{http://dx.doi.org/10.1515/form.2000.008}{10.1515/form.2000.008}},
}

\bib{Kani1997}{article}{
   author={Kani, Ernst},
   title={The number of curves of genus two with elliptic differentials},
   journal={J. Reine Angew. Math.},
   volume={485},
   date={1997},
   pages={93--121},
   issn={0075-4102},
   doi={\href{http://dx.doi.org/10.1515/crll.1997.485.93}{10.1515/crll.1997.485.93}},
}

\bib{Kubert1976}{article}{
   author={Kubert, Daniel Sion},
   title={Universal bounds on the torsion of elliptic curves},
   journal={Proc. London Math. Soc. (3)},
   volume={33},
   date={1976},
   number={2},
   pages={193--237},
   issn={0024-6115},
   doi={\href{http://dx.doi.org/10.1112/plms/s3-33.2.193}{10.1112/plms/s3-33.2.193}},
}

\bib{Leprevost1991a}{article}{
   author={Lepr{\'e}vost, Franck},
   title={Famille de courbes de genre $2$ munies d'une classe de diviseurs 
          rationnels d'ordre $13$},
   journal={C. R. Acad. Sci. Paris S\'er. I Math.},
   volume={313},
   date={1991},
   number={7},
   pages={451--454},
   issn={0764-4442},
   note={\url{http://gallica.bnf.fr/ark:/12148/bpt6k57325582/f455.image}}
}

\bib{Leprevost1991b}{article}{
   author={Lepr{\'e}vost, Franck},
   title={Familles de courbes de genre $2$ munies d'une classe de diviseurs
          rationnels d'ordre $15,\;17,\;19$ ou $21$},
   journal={C. R. Acad. Sci. Paris S\'er. I Math.},
   volume={313},
   date={1991},
   number={11},
   pages={771--774},
   issn={0764-4442},
   note={\url{http://gallica.bnf.fr/ark:/12148/bpt6k57325582/f775.image}}
}

\bib{Leprevost1992}{article}{
   author={Lepr{\'e}vost, Franck},
   title={Torsion sur des familles de courbes de genre $g$},
   journal={Manuscripta Math.},
   volume={75},
   date={1992},
   number={3},
   pages={303--326},
   issn={0025-2611},
   doi={\href{http://dx.doi.org/10.1007/BF02567087}{10.1007/BF02567087}},
}

\bib{Leprevost1995}{article}{
   author={Lepr{\'e}vost, Franck},
   title={Jacobiennes de certaines courbes de genre $2$: torsion et
          simplicit\'e},
   journal={J. Th\'eor. Nombres Bordeaux},
   volume={7},
   date={1995},
   number={1},
   pages={283--306},
   issn={1246-7405},
   note={Les Dix-huiti\`emes Journ\'ees Arithm\'etiques (Bordeaux, 1993).
        \url{http://www.emis.de/journals/JTNB/1995-1/jtnb7-1.html}},

}

\bib{Leprevost1996}{article}{
   author={Lepr{\'e}vost, Franck},
   title={Sur une conjecture sur les points de torsion rationnels des
          jacobiennes de courbes},
   journal={J. Reine Angew. Math.},
   volume={473},
   date={1996},
   pages={59--68},
   issn={0075-4102},
   doi={\href{http://dx.doi.org/10.1515/crll.1995.473.59}{10.1515/crll.1995.473.59}},
}

\bib{Mazur1977a}{article}{
   author={Mazur, B.},
   title={Rational points on modular curves},
   conference={
      title={Modular functions of one variable, V},
      address={Proc. Second Internat. Conf., Univ. Bonn, Bonn},
      date={1976},
   },
   book={
      publisher={Springer, Berlin},
      editor={J.-P. Serre},
      editor={D. B. Zagier},
   },
   date={1977},
   pages={107--148. Lecture Notes in Math., Vol. 601},
   doi={\href{http://dx.doi.org/10.1007/BFb0063947}{10.1007/BFb0063947}},
}

\bib{Mazur1977b}{article}{
   author={Mazur, B.},
   title={Modular curves and the Eisenstein ideal},
   journal={Inst. Hautes \'Etudes Sci. Publ. Math.},
   number={47},
   date={1977b},
   pages={33--186 (1978)},
   issn={0073-8301},
   doi={\href{http://dx.doi.org/10.1007/BF02684339}{10.1007/BF02684339}},
}

\bib{Mazur1978}{article}{
   author={Mazur, B.},
   title={Rational isogenies of prime degree (with an appendix by D.
          Goldfeld)},
   journal={Invent. Math.},
   volume={44},
   date={1978},
   number={2},
   pages={129--162},
   issn={0020-9910},
   doi={\href{http://dx.doi.org/10.1007/BF01390348}{10.1007/BF01390348}},
}

\bib{Milne1986}{article}{
   author={Milne, J. S.},
   title={Abelian varieties},
   conference={
      title={Arithmetic geometry},
      address={Storrs, Conn.},
      date={1984},
   },
   book={
      publisher={Springer, New York},
      editor={Gary Cornell},
      editor={Joseph H. Silverman},
   },
   date={1986},
   pages={103--150},
   doi={\href{http://dx.doi.org/10.1007/978-1-4613-8655-1_5}{10.1007/978-1-4613-8655-1\_5}},
}

\bib{Ogawa1994}{article}{
   author={Ogawa, Hiroyuki},
   title={Curves of genus $2$ with a rational torsion divisor of order $23$},
   journal={Proc. Japan Acad. Ser. A Math. Sci.},
   volume={70},
   date={1994},
   number={9},
   pages={295--298},
   issn={0386-2194},
   note={\url{http://projecteuclid.org/euclid.pja/1195510899}}
}

\bib{PlatonovPetrunin2012a}{article}{
   author={Platonov, V. P.},
   author={Petrunin, M. M.},
   title={New orders of torsion points in Jacobians of curves of genus 2
          over the rational number field},
   journal={Dokl. Math.},
   volume={85},
   date={2012},
   number={2},
   pages={286--288},
   issn={1064-5624},
   doi={\href{http://dx.doi.org/10.1134/S1064562412020330}{10.1134/S1064562412020330}},
   note={Translated from Dokl. Akad. Nauk {\bf 443} (2012), no.~6 664--667},
}

\bib{PlatonovPetrunin2012b}{article}{
   author={Platonov, V. P.},
   author={Petrunin, M. M.},
   title={On the torsion problem in Jacobians of curves of genus 2 over the
          rational number field},
   journal={Dokl. Math.},
   volume={86},
   date={2012},
   number={2},
   pages={642--643},
   issn={1064-5624},
   doi={\href{http://dx.doi.org/10.1134/S1064562412050304}{10.1134/S1064562412050304}},
   note={Translated from Dokl. Akad. Nauk {\bf 446} (2012), no.~3, 263--264},
}

\bib{PlatonovZhgunEtAl2013}{article}{
  author={Platonov, V. P.},
  author={Zhgun, V. S.},
  author={Petrunin, M. M.},
  title={On the simplicity of Jacobians for curves of genus 2 over the
         rational number field containing torsion points of large orders},
  journal={Dokl. Math.},
  volume={87},
  number={3},
  date={2013},
  pages={318--321},
  doi={\href{http://dx.doi.org/10.1134/S1064562413030216}{10.1134/S1064562413030216}},
  note={Translated from Dokl. Akad. Nauk {\bf 450} (2013), no.~4, 385--388},
}

\bib{SteinWatkins2002}{article}{
   author={Stein, William A.},
   author={Watkins, Mark},
   title={A database of elliptic curves---first report},
   conference={
      title={Algorithmic number theory},
      address={Sydney},
      date={2002},
   },
   book={
      series={Lecture Notes in Comput. Sci.},
      volume={2369},
      publisher={Springer, Berlin},
      editor={Claus Fieker},
      editor={David R. Kohel},
   },
   date={2002},
   pages={267--275},
   doi={\href{http://dx.doi.org/10.1007/3-540-45455-1_22}{10.1007/3-540-45455-1\_22}},
}

\end{biblist}
\end{bibdiv}

\end{document}